\newcommand\mut[1]{\ignorespaces}
\newtheorem{thm}{Theorem}
\newtheorem{lem}[thm]{Lemma}
\newtheorem{prop}[thm]{Proposition}
\newtheorem{rem}[thm]{Remark}
\def\ap{\alpha}
\def\tm#1{\left(\text{mod }#1\right)}
\def\mQ{\mathbb Q}
\keywords{Additive number theory, Arithmetic functions, divisor function}
\title{Every integer can be written as a square plus a squarefree}
\author{Jorge Jim\'enez Urroz}
\thanks{Partially supported by grant PID2019-110224RB-I00 of MICINN}
\address{Departamento de Matem\'aticas \\
Universitat Polit\'ecnica Catalunya\\ Edificio C3 - Campus Nord UPC, Carrer de Jordi Girona 1-3, 08034 Barcelona, Spain}
\email{jorge.urroz@upc.edu}
\date{}
\begin{document}

\begin{abstract}{In the paper we can prove that every integer can be written as the sum of  two integers, one perfect square and one squarefree. We also establish the asympotic formula for the number of representations of an integer in this form. The result is deeply related with the divisor function. In the course of our study we get an independent result about it. Concretely we are able to deduce a  new upper bound for the divisor function fully explicit.}
\end{abstract}
\maketitle

\section{Introduction}
In his paper \cite{ester}, (afterwards generalized in different ways,  \cite{ester}, \cite{ester2},\cite{ester3},\cite{lineve}, \cite{p}, \cite{ro},\cite{ss}), Estermann  considers the problem of finding the correct order of magnitude of the function $r(n)$ which counts the number of representations of 
an integer $n$ as a sum of a square plus a squarefree.  The method is quite involved and
has a main disadvantage, which is that even after giving the main term of the asymptotic,
it is not possible to deduce the natural question wether any positive integer $n$ can indeed 
be written as a sum of a square plus squarefree. In the present note, we present a very simple
method, completely explicit, to answer this question. The first result is
\begin{thm}\label{sqfree} Every positive integer $n$ is representable as
$$
n=\square+\,\,^{_{{\text{\circle{9}{}}}}}
$$ 
for some square $\square$ and some positive squarefree integer \, $^{_{{\text{\circle{9}{}}}}}$.
\end{thm}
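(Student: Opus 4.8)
The plan is to introduce the representation function
$$r(n)=\sum_{0\le m\le\sqrt{n-1}}\mu^2(n-m^2),$$
which counts those $m$ for which $n-m^2$ is a positive squarefree number, and to prove $r(n)\ge 1$ for every $n$. Writing $M=\lfloor\sqrt{n-1}\rfloor$, the number of admissible values of $m$ is $M+1\sim\sqrt n$, so it is enough to show that strictly fewer than $M+1$ of the integers $n-m^2$ are divisible by a perfect square larger than $1$.

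For the upper bound on the number of \emph{bad} $m$ I would use a union bound over primes: an integer fails to be squarefree exactly when some $p^2$ divides it, and since $1\le n-m^2\le n$ only primes $p\le\sqrt n$ can occur. Hence the bad count is at most $\sum_{p\le\sqrt n}A_p(n)$, where $A_p(n)=\#\{0\le m\le M:\ m^2\equiv n\ (\mathrm{mod}\ p^2)\}$. For $p\nmid 2n$ the congruence $m^2\equiv n\ (\mathrm{mod}\ p^2)$ has at most two solutions per period, so $A_p(n)\le 2(M/p^2+1)$; summing gives a leading contribution $2M\sum_p p^{-2}$, and because $2\sum_p p^{-2}<1$ this already leaves a positive surplus $(1-2\sum_p p^{-2})\,\sqrt n$ of good $m$. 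The same computation, carried out through the identity $\mu^2(k)=\sum_{d^2\mid k}\mu(d)$, turns $r(n)$ into $\sum_{d}\mu(d)A_d(n)$ and yields the asymptotic formula with the expected singular series $\prod_p(1-\rho(p^2)/p^2)$, where $\rho(d^2)$ denotes the number of square roots of $n$ modulo $d^2$.

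The hard part is to make every estimate completely explicit and valid for all $n$ at once. Two error sources must be controlled uniformly: the accumulated $+1$ terms, whose total is of order $\pi(\sqrt n)$ and must be beaten by the surplus above, and the anomalous primes dividing $n$, for which $\rho(p^2)$ can be as large as $p$. Both are governed by sums of the multiplicative function $\rho(d^2)$, which is intimately related to the number of divisors; this is precisely where I expect a sharp, fully explicit upper bound for the divisor function to enter as the key lemma, letting me bound the entire error by a quantity provably smaller than $\sqrt n$. I would thus (i) establish such an explicit divisor bound, (ii) deduce $r(n)>0$ for all $n$ above an explicit threshold $n_0$, and (iii) settle the finitely many remaining $n\le n_0$ by direct computation. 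The genuine obstacle is (ii): holding all constants explicit while pushing $n_0$ low enough that the verification in (iii) stays finite and feasible.
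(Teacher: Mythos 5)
Your opening move is exactly the paper's: bound the number of bad $m$ by a union bound over single primes $p$ with $p^2\mid n-m^2$, and extract a positive surplus from $2\sum_p p^{-2}\approx 0.9045<1$. The gap is in how you propose to close the argument. The explicit divisor-function bound is \emph{not} the key lemma for this theorem; it is the key lemma for the asymptotic formula obtained from the full M\"obius expansion $\sum_d\mu(d)A_d(n)$, and the paper shows that that route only yields $r(n)>0$ for $n\ge 10^{440}$, which makes your step (iii) impossible. The entire point of the paper's separate proof of the representability statement is to \emph{avoid} $\tau(n)$ by never leaving the primes. Concretely, neither of the two error sources you identify is tamed by a divisor bound. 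The primes dividing $n$ are handled by simply excluding them from the union bound (equivalently, counting only representations coprime to $n$): for $p\,\|\,n$ the congruence $x^2\equiv n\pmod{p^2}$ has no solutions at all, and for $p^2\mid n$ the offending $x$ are exactly those with $p\mid x$, which the coprimality restriction removes; so every congruence that survives has at most two roots and $\tau(n)$ never appears. The accumulated $+1$ terms are of size about $2\pi(\sqrt n)\approx 4\sqrt n/\log n$, which no divisor estimate reduces; with your surplus $0.0955\sqrt n$ this forces $n_0\gtrsim e^{42}$, far beyond any finite verification.

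What the paper actually does to beat the $+1$ terms is split the primes at $\sqrt2\,n^{1/4}$: for $p>\sqrt2\,n^{1/4}$ the two roots of $x^2\equiv n\pmod{p^2}$ in $[0,p^2)$ sum to $p^2>2\sqrt n$, so at most one of them lies in $[0,\sqrt n]$, and the large-prime contribution is at most $\pi(\sqrt n)\approx 2\sqrt n/\log n$, while the $+1$'s accumulate only over $p\le\sqrt2\,n^{1/4}$. Even with this the threshold is only about $1.8\times10^{10}$, and a further case analysis on $n$ modulo $4$ and modulo $9$ (dropping the prime $2$ and/or $3$ from the union bound when $n\not\equiv1\pmod4$ or $n\not\equiv\square\pmod 9$, and adding back the doubly subtracted solutions modulo $36$ otherwise) is needed to improve the constant $0.9045$ to at most $0.7934$ and bring the threshold down to $1375077$, where the machine check is feasible. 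None of these three devices appears in your plan, and without them step (ii) does not terminate at a computable $n_0$.
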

But, in fact, we can also establish the correct order of magnitude of $r(n)$ in a 
more direct and simple way than the one used in \cite{ester}. Let $g$ be the completely multiplicative function which, at prime $p$, counts the number of solutions to the congruence class $x^{2}\equiv n\pmod{p^2}$.

\begin{thm}\label{repsqfree} Let $r(n)$ the number of primitive representations of $n$ as a square plus squarefree. Then
$$
r(n)=n^{1/2}\prod_{p}\left(1-\frac{g(p)}{p^2}\right)+E(n),
$$
where
$$
|E(n)|<\frac{24}{(\log n)^{\frac34}}n^{\frac{\log 2+0.342}{\log\log n}}.
$$
\end{thm}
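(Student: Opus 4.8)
The plan is to count primitive representations $n = x^2 + s$ where $s$ is squarefree, by parametrizing over the square $x^2$ and detecting squarefreeness with the Möbius-type indicator $\sum_{d^2 \mid s}\mu(d)$. Writing $s = n - x^2$ with $0 \le x^2 < n$ (so $0 \le x < n^{1/2}$), we have

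$$
r(n) = \sum_{0 \le x < n^{1/2}} \sum_{\substack{d^2 \mid n - x^2}} \mu(d),
$$

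subject to the primitivity condition on the gcd. Swapping the order of summation, for each $d$ the inner count is the number of $x$ in the range $[0, n^{1/2})$ with $x^2 \equiv n \pmod{d^2}$. The number of solutions of this congruence modulo $d^2$ is exactly the completely multiplicative function $g(d)$ defined just before the statement (the count of solutions to $x^2 \equiv n \pmod{p^2}$ at each prime, extended multiplicatively via CRT), so the count of $x$ in the interval is $g(d)\,n^{1/2}/d^2 + O(g(d))$ — the main term coming from equidistribution of solution classes and the error from the fractional part.

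**Next I would** sum these contributions over $d$. The main term assembles as
$$
n^{1/2} \sum_{d} \frac{\mu(d) g(d)}{d^2} = n^{1/2} \prod_{p}\left(1 - \frac{g(p)}{p^2}\right),
$$
using multiplicativity of both $\mu$ and $g$ to factor the sum into an Euler product; this is precisely the claimed principal term. **The remaining work**, and the heart of the error analysis, is to bound the accumulated error $\sum_d |\mu(d)| \, O(g(d))$. Here one must truncate the $d$-sum at a suitable level and estimate the tail separately, since $d$ only ranges effectively up to $n^{1/2}$ (beyond that $d^2 > n$ forces $s = n - x^2$ to vanish or the congruence to have no small solution). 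The quantity $\sum_{d \le D} g(d)$ behaves like a divisor-type sum because $g(p) \le 2$ for each prime, so $g(d) \le 2^{\omega(d)} \le \tau(d)$ in the relevant squarefree range.

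**The main obstacle** will be controlling $\sum_{d} g(d)$ sharply enough to produce the explicit constant $24$ and the exponent $\frac{\log 2 + 0.342}{\log\log n}$ in $E(n)$. This is exactly where the paper's advertised fully explicit divisor-function bound enters: since $g(d) \le \tau(d)$ on squarefrees (and more precisely $g$ is governed by $2^{\omega(d)}$), the error term is dominated by a partial sum of the divisor function, and an explicit upper bound for $\tau(n)$ of the shape $n^{(\log 2 + c)/\log\log n}$ transfers directly into the stated bound on $|E(n)|$. **I would therefore** first establish (or invoke) the explicit divisor bound, then feed it into the tail estimate, being careful that the factor $(\log n)^{-3/4}$ emerges from the interplay between the truncation point and the average order of $g$. **The delicate point** is tracking constants through the truncation and the application of the divisor bound so that they combine into the clean numerical constants claimed, rather than merely obtaining the correct order of magnitude; this bookkeeping, together with justifying that the primitivity restriction does not disturb the Euler product, is where the real care is required.
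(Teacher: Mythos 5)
Your setup — Möbius detection of squarefreeness, swapping the order of summation, recognizing the inner count as $g(d)n^{1/2}/d^2+O(g(d))$, and reassembling the main term as the Euler product — matches the paper's opening moves. But there are two genuine gaps. The first and most serious is your treatment of large $d$. You say only that one should ``truncate the $d$-sum at a suitable level and estimate the tail separately,'' but the tail $y<d\le\sqrt{n}$ is exactly where the trivial bound fails: summing $O(g(d))$ over all $d\le\sqrt{n}$ costs roughly $\sqrt{n}\log n$, which swamps the main term, and the congruence-counting bound $g(d)(1+\sqrt{n}/d^2)$ is useless once $d>n^{1/4}$. The paper's key idea here is entirely different from anything you propose: it bounds the tail $S_2$ by counting triples $(k,x,d)$ with $n=x^2+kd^2$, $k$ squarefree and $d>y$, observing that there are at most $n/y^2$ admissible $k$ and that for each fixed $k$ the solutions $(x,d)$ correspond to ideal factorizations of $n$ in $\mathbb{Q}(\sqrt{-k})$, hence number at most $6\tau(n)$. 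This yields $|S_2|\le 6n\tau(n)/y^2$, and only then does the optimization $y\asymp n^{1/3}(\log n)^{-2/3}$ produce an error of size $n^{1/3+o(1)}$. Without some substitute for this step your argument does not close.

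The second gap is your assertion that $g(p)\le 2$ for every prime, so that $g(d)\le 2^{\omega(d)}\le\tau(d)$. This is false when $p^2\mid n$: in that case $x^2\equiv n\pmod{p^2}$ has $p$ solutions, so $g(p)=p$ can be arbitrarily large. The paper has to write $n=n_{\mathfrak s}^2 n_{\mathfrak b}$ with $n_{\mathfrak b}$ squarefree and carry explicit factors $\tau(n_{\mathfrak s})$ through both the tail of the Euler product and the bound $\sum_{d\le y}|R(d)|\le \frac{e^6}{(\log 2)^2}y(\log y)^2\tau(n_{\mathfrak s})$; this is a second, independent source of the divisor function in the error term, separate from the $\tau(n)$ coming from $S_2$. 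Your instinct that the explicit divisor bound of Theorem~\ref{teo:tau} (with $\varepsilon=0.342$) is what converts these $\tau$ factors into the stated exponent is correct, but the bookkeeping you defer is not mere bookkeeping: it requires both the quadratic-field counting argument for $S_2$ and the correct handling of the primes with $p^2\mid n$.
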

It is interesting to note that, even though Theorem \ref{repsqfree} is absolutely explicit,
only  provides a proof of Theorem \ref{sqfree} for $n$ big enough. In fact, one can only get positivity of $r(n)$ for values of $n\ge 10^{440}$. This is clearly imposible to verify by means of computers, and might be the case that we will never reach this bound, independently of the speed of the cpu. 
The main issue is making the error term explicit, and as small as possible. However, the error term depends on the multiplicative function $g(\cdot)$ which, for primes $p|n$ is as big as
$p$. 

\

If one wants to avoid those primes $p|n$, so the $g$ function remains absolutely bounded, one
ends with another natural problem which is study the behaviour of $r^*(n)$, the number of primitive representations of $n$ as a square plus squarefree, i.e. representations in which both $\square$ and  \, $^{{{\text{\circle{9}{}}}}}$ are coprime with $n$.  Again one can also get the asymptotic formula in a very similar way.  It is possible to prove
\begin{thm}\label{primrepsqfree} Let $r^*(n)$ the number of primitive representations of $n$ as a square plus squarefree. Then
$$
r^*(n)=n^{1/2}\prod_{p|n}\left(1-\frac1p\right)\prod_{p\nmid n}\left(1-\frac{g(p)}{p^2}\right)+ E^*(n),
$$
where
$$
E^*(n)<\frac{24}{(\log n)^{\frac34}}n^{\frac{\log 2+0.342}{\log\log n}}.
$$
\end{thm}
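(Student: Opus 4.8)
The plan is to mimic, step by step, the argument behind Theorem~\ref{repsqfree}, inserting at the outset the coprimality constraint. First I would record the elementary observation that a representation $n=x^2+s$ is primitive precisely when $(x,n)=1$: indeed $\gcd(s,n)=\gcd(n-x^2,n)=\gcd(x^2,n)$, so $s$ is coprime to $n$ if and only if $x$ is. Hence
\[
r^*(n)=\sum_{\substack{1\le x<\sqrt n\\(x,n)=1}}\mu^2(n-x^2)=\sum_{\substack{1\le x<\sqrt n\\(x,n)=1}}\ \sum_{d^2\mid n-x^2}\mu(d),
\]
using the standard identity $\mu^2(m)=\sum_{d^2\mid m}\mu(d)$. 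Interchanging the order of summation, the inner count over $x$ is nonzero only when $(d,n)=1$: if a prime $p$ divided both $d$ and $n$, then $x^2\equiv n\equiv 0\pmod p$ would force $p\mid x$, contradicting $(x,n)=1$. This is exactly the mechanism that will replace the unrestricted Euler factor at $p\mid n$ by the factor $1-1/p$.

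Next I would extract the main term. Writing the coprimality condition through M\"obius inversion, $\mathbf{1}_{(x,n)=1}=\sum_{e\mid(x,n)}\mu(e)$, one is led to count, for each pair $(d,e)$ with $(d,n)=1$ and $e\mid n$, the integers $x\in[1,\sqrt n)$ with $e\mid x$ and $x^2\equiv n\pmod{d^2}$. Since $(e,d)=1$, the \txr{} groups these into $g(d)$ residue classes modulo $ed^2$, so the count equals $\frac{g(d)}{e\,d^2}\sqrt n$ up to an error of at most $g(d)$ per pair. Summing the principal part over $e\mid n$ produces $\prod_{p\mid n}(1-1/p)$, and summing the resulting series over $d$ coprime to $n$ gives, by complete multiplicativity of $g$ and the squarefree support of $\mu$,
\[
\sum_{(d,n)=1}\frac{\mu(d)g(d)}{d^2}=\prod_{p\nmid n}\Bigl(1-\frac{g(p)}{p^2}\Bigr).
\]
This yields the claimed main term $n^{1/2}\prod_{p\mid n}(1-1/p)\prod_{p\nmid n}(1-g(p)/p^2)$.

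The remaining, and genuinely delicate, step is the explicit bound on $E^*(n)$. Here the argument runs exactly parallel to the error analysis of Theorem~\ref{repsqfree}: the error is an aggregate of the discrepancies coming from counting the above arithmetic progressions in the interval $[1,\sqrt n)$, together with the tail of the $d$-series beyond $\sqrt n$. Because we have restricted to $d$ coprime to $n$, the function $g(d)$ is now bounded by $2^{\omega(d)}$ (each local factor satisfies $g(p)\le 2$ for $p\nmid n$), so the dangerous large values of $g$ at primes dividing $n$---which are responsible for the size of the error in the non-primitive case---never appear, and this is precisely what allows the same bound to persist. The M\"obius sum over $e\mid n$ contributes at worst a factor $2^{\omega(n)}\le\tau(n)$, which is absorbed into the explicit divisor-function estimate already used for $E(n)$. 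I expect the main obstacle to be exactly this final bookkeeping: making the total of the progression-discrepancies explicit with the constant $24$ and the exponent $(\log 2+0.342)/\log\log n$, which is where the fully explicit upper bound for the divisor function---the independent result announced in the abstract---is invoked. Granting that estimate, the bound for $E^*(n)$ follows with the same constants as for $E(n)$, completing the proof.
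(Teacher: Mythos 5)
Your proposal is correct and follows exactly the route the paper intends: the paper only sketches the proof of this theorem in a closing remark (defining $S^*(n)$ with the condition $(x,n)=1$ and noting that the factor $\prod_{p\mid n}(1-1/p)$ comes from that condition), and your M\"obius expansion of the coprimality condition, the observation that only $d$ with $(d,n)=1$ contribute, and the resulting splitting of the Euler product are precisely the details being left to the reader. One small caution: in the error analysis the relevant split is at $y\approx n^{1/3}$, not at $\sqrt n$ --- the range $y<d\le\sqrt n$ must still be handled by the paper's ideal-factorization count in $\mQ(\sqrt{-k})$, since the progression-discrepancy bound summed over all $d\le\sqrt n$ would exceed the main term; your appeal to the ``exactly parallel'' error analysis of Theorem \ref{repsqfree} covers this, but the phrase ``tail beyond $\sqrt n$'' should read ``beyond $y$''.
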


No matter which of the two functions we are considering,  we  see that in a natural way the error term depends on the divisor function $\tau(n)$, and this is what makes untractable the proof of Theorem \ref{sqfree}. Although it is known that $\tau(n)=O(n^{\delta})$ for any positive $\delta$, the constant in front of this estimation grows double exponentially with $\delta$ which causes the enormous values of $n$ for which the error term starts to be negligible.

\

To overtake this problem we need a fully explicit upper bound for the divisor function. This problem has already been consider in the literature since  $1907$ year in which Wigert gets in \cite{wigert} the correct order of magnitude for the divisor function. Concretely he is able to prove to things. Frist that  $\tau(n)<2^{\frac{\log n}{\log\log n}(1+\varepsilon)}$ is valid for any $\varepsilon>0$ and all $n$ suffficiently large depending on $\varepsilon$,  and also that $\tau(n)>2^{\frac{\log n}{\log\log n}(1-\varepsilon}$ is valid for any $\varepsilon$  and infinitely many $n$. While for the first result the author uses elementary techniques, for the second needs the knowledge of the prime number theorem. Then Ramanujan,  (see \cite{rama} or  \cite [p.115]{r}) using his highly composite numbers, deduce the bounds elementary, and gets
$$
\limsup\frac{\log(\tau(n))\log\log n}{\log 2\log n}=1,
$$
together with a very complete account of the divisor function. This bound now is part of any elementary course in number theory and  can be found for example in \cite[ch.18]{hawri}).  However, Ramanujan misses to get an explicit bound valid for any integer $n$ and it is in   \cite{niro} were the authors get 
$$
\frac{\log(\tau(n))\log\log n}{\log 2\log n}\le 1.5379.
$$
valid for any $n$. We could use this to continue our study. However,  we dedicate Section \ref{sec:divisor} to find a completely explicit upper bound for the divisor function, in a more direct way than in  \cite{niro}, but also more complete, in the following sense. We prove 
\begin{thm}\label{teo:tau} For every  $\varepsilon>0$, and every $n>e^{e^{\frac{14}{\sqrt{1+28\varepsilon}-1}}}$ we have 
$$
\tau(n)<n^{\frac{\log 2+\varepsilon}{\log\log n}}.
$$
In particular, for any $n\ge 3$
$$
\tau(n)<n^{\frac{3}{\log\log(n)}}.
$$
\end{thm}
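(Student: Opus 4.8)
The plan is to use the standard exponent device, but to evaluate the resulting error term through the \emph{integer} optimum rather than its continuous relaxation, since the latter loses a constant factor that reintroduces a spurious $\log\log\log n$ and forces the weaker thresholds of earlier treatments. Write $n=\prod_p p^{a_p}$ and fix a parameter $\delta>0$ to be chosen near $\tfrac{\log 2}{\log\log n}$. For every prime $p\ge 2^{1/\delta}$ one has $a_p+1\le 2^{a_p}\le p^{\delta a_p}$, so such primes contribute at most their share of $n^{\delta}$. Hence the whole problem reduces to an explicit bound for a sum over the small primes:
\[
\log\tau(n)\le\delta\log n+\sum_{p<2^{1/\delta}}g_p,\qquad g_p:=\max_{a\ge 0}\bigl(\log(a+1)-\delta a\log p\bigr).
\]

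The key computation is to unfold $g_p$ as a telescoping sum. Because the increments $\log\tfrac{j+1}{j}$ decrease in $j$, the integer maximum is $g_p=\sum_{j\ge 1:\ \log\frac{j+1}{j}>\delta\log p}\bigl(\log\tfrac{j+1}{j}-\delta\log p\bigr)$, whence
\[
\sum_p g_p=\sum_{j\ge 1}\Bigl(\log\tfrac{j+1}{j}\,\pi(Q_j)-\delta\,\theta(Q_j)\Bigr),\qquad Q_j:=\Bigl(\tfrac{j+1}{j}\Bigr)^{1/\delta}.
\]
In each term the two main contributions cancel to first order, and inserting fully explicit Chebyshev/Rosser--Schoenfeld--Dusart estimates for $\pi$ and $\theta$ leaves a second-order remainder of size $\asymp \delta^2 Q_j/\log\frac{j+1}{j}$. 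Since $Q_j$ shrinks geometrically, the $j=1$ term dominates and yields a bound of the shape $\sum_p g_p\le C\,\delta^2\,2^{1/\delta}$ with an absolute, computable constant $C$. This is exactly the step where the integer optimum is essential: the continuous estimate $g_p\le\delta\log p-1-\log(\delta\log p)$ would give $\asymp\delta\,2^{1/\delta}$, larger by a factor $1/\delta$.

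Finally I would optimize $\delta$. Writing $X=\log n$ and $L=\log\log n$ and substituting $\delta=\tfrac{\log 2}{L}(1+s)$ turns the estimate into $\log\tau(n)\le\tfrac{\log 2\,X}{L}(1+s)+C'\,\tfrac{X^{1-s}}{L^{2}}$; minimizing over $s>0$ balances a term linear in $s$ against $X^{-s}=e^{-sL}$, and after the explicit constants are inserted and rounded up this produces a requirement of the exact form $\varepsilon L^{2}>L+7$ (the coefficient $1$ replacing the true $\log 2$, together with the $7$, being the slack absorbed from the explicit prime bounds). Solving $\varepsilon L^{2}-L-7>0$ for $L$ gives $L>\tfrac{1+\sqrt{1+28\varepsilon}}{2\varepsilon}=\tfrac{14}{\sqrt{1+28\varepsilon}-1}$, which is precisely the stated lower bound on $n$; the ``in particular'' clause then follows by taking $\varepsilon=3-\log 2$, the few $n$ below the resulting threshold $e^{e^{1.97\ldots}}$ being checked directly (there the right-hand side is astronomically large). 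The main obstacle is entirely in the explicit bookkeeping: one must carry the second-order terms of the prime estimates through the $\delta$-optimization cleanly enough to land on the clean constants $1$ and $7$, and the argument succeeds only because $g_p$ is summed via its integer rather than its continuous maximum.
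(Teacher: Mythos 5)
Your proposal is correct and follows essentially the same route as the paper: your telescoped sum $\sum_j\bigl(\log\tfrac{j+1}{j}\,\pi(Q_j)-\delta\,\theta(Q_j)\bigr)$ is exactly Ramanujan's bound $\log C(\delta)$ from which the paper starts, your target $\sum_p g_p\le C\,\delta^2 2^{1/\delta}$ is the paper's $H(\delta)$, and your final quadratic $\varepsilon L^2-L-7>0$ is literally the paper's condition $7x^2+x-\varepsilon<0$ with $x=1/L$, yielding the same threshold and the same cutoff near $1320$ for the second claim. The only (minor) divergence is that the paper evaluates $\pi(Q)\log Q-\theta(Q)$ through the identity $\int_2^{Q}\pi(t)t^{-1}\,dt$ plus a generalized partial-summation lemma, so it needs only an upper bound on $\pi$, whereas you propose subtracting separate explicit estimates for $\pi$ and $\theta$ — a bit more delicate for the small $Q_j$, but not a gap.
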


\begin{rem}The second inequality is not best possible, as it is shown in \cite{niro}. However, we include it to show its simplicity in contrast with the results in \cite{niro}.
\end{rem}
\

The first  application we can give  of this theorem is  simply ending the proofs of  Theorems \ref{sqfree}, \ref{repsqfree} and \ref{primrepsqfree}. However, the applications of these type of upper bounds are vast, (see \cite{kole}, \cite{r}, or \cite{tao}) and we expect to show some others in the near future. 

\section{Starting the proof of Theorems \ref{repsqfree} and \ref{primrepsqfree}.}

The proofs of Theorems \ref{repsqfree} and \ref{primrepsqfree} are very similar so
we will just do the first one and include the main difference to get the second one, leaving
the details to the interested reader.  

\

We will use standard notations as $[x]$ for the integer part of $x$, $\{x\}=x-[x]$,  the Landau $O$ notation, or $p^e||n$ if $p^e$ is the highest power of a prime $p$ dividing $n$. Also a $\mathfrak b$ above a sum will restrict the sum to squarefree integers, $\pi(x)$ counts the number of primes up to $x$ and $\theta(x)=\sum_{p\le x}\log p$.

\

Consider $S(n)=\{0\le x\le n^{1/2}:n-x^{2} \text{ is squarefree}\}$. Then, $r(n)=|S(n)|$ and
\begin{eqnarray*}
r(n)&=&\sum_{0\le x\le n^{1/2}}\sum_{d^2|(n-x^{2})}\mu(d)=\sum_{d\le \sqrt{n}}\mu(d)\sum_{{0\le x\le n^{1/2}}\atop{d^2|n-x^{2}}}1\\
&=&\sum_{d\le y}\mu(d)\sum_{{0\le x\le n^{1/2}}\atop{d^2|n-x^{2}}}1+
\sum_{y<d\le \sqrt{n}}\mu(d)\sum_{{0\le x\le n^{1/2}}\atop{d^2|n-x^{2}}}1\\
&=&S_1+S_2,
\end{eqnarray*}
for any $0\le y<\sqrt n$. Given any integer $n$, let $g(\cdot)$ be the completely multiplicative function with value at primes given by the number of solutions to the equation $x^{2}\equiv n \tm{p^2}$ . Observe that  $g(2)=0$ if $n\equiv 2,3 \tm{4}$ and $2$ if $n\equiv 0,1 \tm {4}$, and for $p$ odd we have  $g(p)=0$ if $n$ is not a square mod $p$, or if $p||n$, $g(p)=p$ if $p^2|n$ and finally $g(p)=2$ if $n$ is a nonzero square modulo $p$. Then, first note that the sum can be restricted to squarefree integers $d$. Further note that for any squarefree $d$,  on each interval of lenght $d^2$ there are exactly $g(d)$ solutions so we get
$$
\sum_{\substack{0\le x\le n^{1/2}\\ d^2|n-x^{2}}}1=n^{\frac12}\frac{g(d)}{d^2}+R(d)
$$
where
$$
|R(d)|\le g(d), 
$$
and so
\begin{eqnarray}\label{eq:s1}
S_1&=&n^{1/2}\sum_{d\le y}\mu(d)\frac{g(d)}{d^2}+\sum_{d\le y}\mu(d)R(d)\nonumber \\
&=&n^{1/2}\prod_p\left(1-\frac{g(p)}{p^2}\right)+n^{1/2}\sum_{d> y}\mu(d)\frac{g(d)}{d^2}+\sum_{d\le y}\mu(d)R(d)
\end{eqnarray}

The first part
in (\ref{eq:s1}) will be the main term of $r(n)$. Then, we need to bound the error term. We start with the first part $\sum_{d> y}\mu(d)\frac{g(d)}{d^2}$.
Now, for any $z>20$ we have
\begin{eqnarray*}
\sum_{d>z}^{\kern14pt\mathfrak b}\frac{2^{\omega(d)}}{d^2}&\le&\sum_{d>z}\frac{1}{d^2}\sum_{k|d}1=
\sum_{k}\frac{1}{k^2}\sum_{d>z/k}\frac{1}{d^2}\\
&\le& \zeta(2)\sum_{k\ge z-2}\frac{1}{k^2}
+
\sum_{k<z-2}\frac{1}{k(z-k)}\\
&=&\zeta(2)\sum_{k\ge z-2}\frac{1}{k^2}
+
\frac1z\left(\sum_{k<z-2}\frac{1}{k}+\sum_{k<z-2}\frac{1}{z-k}\right)
\\
&<&
\frac{\zeta(2)}{z-3} +\frac{1+2\log z}{z}<
\frac{3\log z}{z},
\end{eqnarray*}
meanwhile if $z\le 20$, then
$$
\sum_{d>z}^{\kern14pt\mathfrak b}\frac{2^{\omega(d)}}{d^2}\le\prod_p\left(1+\frac{2}{p^2}\right)<2.2
$$ 
and so
\begin{eqnarray*}
\left|\sum_{d>y}\mu(d)\frac{g(d)}{d^2}\right|&\le& \sum_{r|n_\mathfrak s}^{\kern14pt\mathfrak b}\frac{1}{r} \sum_{d>y/r}^{\kern14pt\mathfrak b}\frac{2^{\omega(d)}}{d^2}\le \sum_{\substack{r|n_\mathfrak s\\r< \frac y{20}}}^{\kern14pt\mathfrak b}\frac{1}{r} \sum_{d>y/r}^{\kern14pt\mathfrak b}\frac{2^{\omega(d)}}{d^2}+\sum_{\substack{r|n_\mathfrak s\\r\ge \frac y{20}}}^{\kern14pt\mathfrak b}\frac{1}{r} \sum_{d>y/r}^{\kern14pt\mathfrak b}\frac{2^{\omega(d)}}{d^2}\\
&\le& 
\left(\frac{3\log y+44}{y}\right)\tau(n_\mathfrak s).
\end{eqnarray*}
Moreover, the main term in $r(n)/n^{1/2}$ is a convergent product for any given  $n$ and verifies, by the
definition of $g(p)$,  
\begin{equation}\label{eq:gipi}
\prod_{p}\left(1-\frac{g(p)}{p^2}\right)\ge\prod_{p\nmid n}\left(1-\frac{2}{p^2}\right)\prod_{p|n_\mathfrak s}\left(1-\frac{1}{p}\right)>\frac{\prod_{p}\left(1-\frac{2}{p^2}\right)}{e^{\gamma}\log\log n+\frac{3}{\log\log n}}.
\end{equation}
For the last inequality see for example \cite[p.114]{l}, or \cite{clark}.

\

For the second error term in (\ref{eq:s1}) we have the upper bound
\begin{eqnarray}\label{error}
\sum_{d\le y}^{\kern14pt\mathfrak b}g(d)\le y\sum_{d\le y}^{\kern14pt\mathfrak b}\frac{g(d)}{d}\le y\prod_{p\le y}\left(1+\frac{g(p)}{p}\right)\le ye^{2\sum_{p\le y}\frac1p}\prod_{p^2|n}2.
\end{eqnarray}
We have used the previuos identities for $g(p)$, with the trivial bounds  $2\le e$ and $\log(1+x)\le x$ for $0<x<1$. We will now need the following proposition.
\begin{prop}\label{eq:inverse}  For every $x\ge 2$ we have
\begin{equation}\label{eq:inverse2}
\sum_{p\le x}\frac1p\le \log\log x +3-\log\log 2.
\end{equation}
\end{prop}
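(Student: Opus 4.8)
The plan is to derive this second Mertens-type estimate from the first Mertens theorem, $\sum_{p\le t}\frac{\log p}{p}=\log t+O(1)$, by Abel summation, tracking constants so that the final slack comes out to be exactly the constant $3$ appearing in (\ref{eq:inverse2}).

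First I would record an explicit one-sided version of Mertens' first theorem: for every $t\ge 2$,
\[
\sum_{p\le t}\frac{\log p}{p}\le\log t+\log 2 .
\]
This is classical and elementary. One route is to start from $\log\lfloor t\rfloor!=\sum_{n\le t}\Lambda(n)\lfloor t/n\rfloor$, replace $\lfloor t/n\rfloor$ by $t/n$ at the cost of the Chebyshev bound $\psi(t)=\sum_{n\le t}\Lambda(n)=O(t)$, and combine with Stirling's estimate $\log\lfloor t\rfloor!=t\log t-t+O(\log t)$; this yields $\sum_{n\le t}\frac{\Lambda(n)}{n}=\log t+O(1)$ with an explicit constant. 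Discarding the convergent prime-power tail $\sum_{k\ge2}\sum_p\frac{\log p}{p^k}$ only decreases the sum, so the same upper bound holds for $\sum_{p\le t}\frac{\log p}{p}$; a small computation confirms the constant may be taken to be $\log 2$. One may instead simply quote the explicit inequalities of Rosser and Schoenfeld.

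Next I would set $A(t)=\sum_{p\le t}\frac{\log p}{p}$ and apply Abel summation with the weights $1/\log p$:
\[
\sum_{p\le x}\frac1p=\sum_{p\le x}\frac{\log p}{p}\cdot\frac1{\log p}=\frac{A(x)}{\log x}+\int_2^x\frac{A(t)}{t\log^2 t}\,dt .
\]
Writing $A(t)=\log t+E(t)$ with $E(t)\le\log 2$, the $\log t$ part contributes exactly
\[
1+\int_2^x\frac{dt}{t\log t}=1+\log\log x-\log\log 2 ,
\]
since $\frac{d}{dt}\log\log t=\frac1{t\log t}$. The two remaining pieces are $E$ weighted by positive factors and are bounded above, using $x\ge2$ and $\int_2^\infty\frac{dt}{t\log^2 t}=\frac1{\log 2}$, by
\[
\frac{E(x)}{\log x}+\int_2^x\frac{E(t)}{t\log^2 t}\,dt\le\frac{\log 2}{\log 2}+\log 2\cdot\frac1{\log 2}=2 .
\]
Adding the two contributions gives $\sum_{p\le x}\frac1p\le\log\log x-\log\log 2+3$, which is precisely (\ref{eq:inverse2}).

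The only delicate point, and the step I expect to be the real obstacle, is the explicit constant in the one-sided first Mertens theorem: it must come out at most $\log 2$, for only then do the two error terms total at most $2$ and produce the clean constant $3$. Everything after that is the routine Abel-summation identity together with the two elementary integrals $\int_2^x\frac{dt}{t\log t}$ and $\int_2^\infty\frac{dt}{t\log^2 t}$.
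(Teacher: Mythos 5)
Your proposal follows essentially the same route as the paper: an explicit one‑sided form of Mertens' first theorem, $\sum_{p\le t}\frac{\log p}{p}\le \log t+c$, followed by Abel summation against $1/\log p$, and your summation step is carried out correctly. The only soft spot is exactly where you locate it, the constant $c$. The elementary derivation you sketch (Chebyshev plus Stirling, then discarding the prime‑power tail) naturally yields $c$ equal to an explicit Chebyshev constant for $\psi(t)/t$, which cannot be pushed below $1>\log 2$ since $\psi(t)\sim t$; obtaining $c=\log 2$ would require genuinely subtracting the prime‑power contribution uniformly in $t$, so it is not quite "a small computation" (quoting Rosser--Schoenfeld, who prove the stronger $\sum_{p\le x}\frac{\log p}{p}<\log x$, would of course settle it). The paper instead proves its own Chebyshev‑type bound, Lemma \ref{erdos}, namely $\theta(x)<(x-4)\log 4$ for $x\ge 8$, by induction on primes via the central binomial coefficient, and feeds it into $x\log x\ge \sum_{p\le x}\log p\left[\frac{x}{p}\right]$ to get $c=\log 4$. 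The reason this larger constant still produces the stated $+3$ is a cancellation your estimate gives away: writing $A(t)=\log t+E(t)$ with $E(t)\le c$, the boundary term and the integral combine as
$$
\frac{E(x)}{\log x}+\int_2^x\frac{E(t)}{t(\log t)^2}\,dt\le \frac{c}{\log x}+c\left(\frac{1}{\log 2}-\frac{1}{\log x}\right)=\frac{c}{\log 2},
$$
so any $c\le \log 4=2\log 2$ keeps the total error at most $2$. With that observation your argument closes using only the easily obtained constant $\log 4$, and the requirement $c\le\log 2$ --- the step you rightly flag as the obstacle --- disappears.
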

We start with the following inequality
 \begin{equation}\label{sum}
x\log x\ge \sum_{n\le x}\log n\ge\sum_{p\le x}\log p\left[\frac xp\right]\ge x\sum_{p\le x}\frac{\log p}p-\sum_{p\le x}\log p,
\end{equation}
which is true since $\log p$ will appear, at least once, for each multiple of $p$ less than $x$. 
Now we prove the following lemma
\begin{lem}\label{erdos} Let $\theta(x)=\sum_{p\le x}\log p$. For any $x\ge 8$  we have
$$
\theta(x)< (x-4)\log 4.
$$
\end{lem}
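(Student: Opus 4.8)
The bound is the classical Chebyshev--Erd\H{o}s estimate on $\theta(x)$, and the plan is to prove it by the standard binomial-coefficient argument, taking care that the constant is sharp enough to produce the shift by $4$. First I would reduce the real statement to an integer one: since $\theta$ is constant on each interval $[n,n+1)$ while the right-hand side $(x-4)\log 4$ is increasing, it suffices to prove the strict inequality $\theta(n)<(n-4)\log 4$ for every integer $n\ge 8$, after which the real case follows by taking $n=\lfloor x\rfloor$, because then $\theta(x)=\theta(n)<(n-4)\log 4\le(x-4)\log 4$.

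The engine of the induction is the elementary divisibility fact that every prime $p$ with $m+1<p\le 2m+1$ divides $\binom{2m+1}{m}$, together with $2\binom{2m+1}{m}\le\sum_{k}\binom{2m+1}{k}=2^{2m+1}$, which gives $\binom{2m+1}{m}<4^m$ for $m\ge 1$. Taking logarithms yields
\[
\theta(2m+1)-\theta(m+1)=\sum_{m+1<p\le 2m+1}\log p\le \log\binom{2m+1}{m}<m\log 4.
\]
I would then run a strong induction for $n\ge 15$. If $n$ is even (so $n\ge 16$) then $n$ is composite, hence $\theta(n)=\theta(n-1)<(n-5)\log 4<(n-4)\log 4$ by the inductive hypothesis applied to $n-1\ge 15$. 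If $n=2m+1$ is odd then $m+1\ge 8$, and combining the displayed inequality with the hypothesis $\theta(m+1)<(m-3)\log 4$ gives
\[
\theta(2m+1)<(m-3)\log 4+m\log 4=(2m-3)\log 4=(n-4)\log 4,
\]
so the shift by $4$ is reproduced exactly at each odd step, which is the crucial point: the argument is self-perpetuating and loses no slack.

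Finally I would dispose of the finitely many base cases $8\le n\le 14$ by direct computation. The only genuinely tight one is $n=8$, where $\theta(8)=\log(2\cdot3\cdot5\cdot7)=\log 210<\log 256=4\log 4$; the numerical inequality $210<256$ is precisely what allows the constant $4$ to go through. This is where I expect the only real care to be needed: the entire improvement over the cruder estimate $\theta(x)<x\log 4$ rests on there being enough room in the smallest base case for the self-reproducing odd step to preserve the bound, so verifying that each of $n=8,\dots,14$ clears $(n-4)\log 4$ is the main (and only) obstacle, the remainder being the routine Chebyshev induction.
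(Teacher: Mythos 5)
Your proof is correct and follows essentially the same route as the paper: both rest on bounding $\prod_{m+1<p\le 2m+1}p$ by the central binomial coefficient $\binom{2m+1}{m}<4^m$ and feeding this into an induction that reproduces the shift by $4$ at each odd step. The only difference is organizational (you run a strong induction on all integers with an even/odd split and base cases $8\le n\le 14$, while the paper inducts from one prime to the next with base case $8\le x<17$), which changes nothing of substance.
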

We prove it by induction.   Let us call
$p_n$ the $n$-th prime number. The result  is clearly true for $8\le x<17$, and for 
$p_n\le x< p_{n+1}$, we have $\theta(x)=\theta(p_n)$, so we suppose it is true for $x< p_{n+1}$, and want
to prove it for $x=p_{n+1}$. However 
$$
\prod_{\frac{p_{n+1}+1}{2}<p\le p_{n+1}}p\le \binom{p_{n+1}}{\frac{p_{n+1}+1}{2}}<2^{p_{n+1}-1},
$$
so
\begin{eqnarray*}
\theta(p_{n+1})&=&\sum_{\frac{p_{n+1}+1}{2}<p\le p_{n+1}}\log p+\sum_{p\le\frac{p_{n+1}+1}{2}}\log p\\ 
&\le&  (p_{n+1}-1)\log 2+\log 4\frac{p_{n+1}+1}{2}-4\log 4\\
&=& (p_{n+1}-4)\log 4.
\end{eqnarray*}
{\bf Remark:}
This same proof should serve to deduce that $\theta(x)< (x-c)\log 4$
for every $c$, and $x$ large enough depending on $c$, by just bounding the number of primes $p$ such that $2p-1$ is also prime. Also, observe that this is basically the same old idea
of Thcebychev, although Lemma \ref{erdos} is a little bit more accurate.

\

Plugging  Lemma \ref{erdos} into (\ref{sum}) we get 
\begin{equation}\label{eq:primeinvers}
\sum_{p\le x}\frac{\log p}p\le \log x+\log 4,
\end{equation}
for $x\ge 8$, but for $2\le x\le 8$ the previous inequality is trivial.
On the other hand, if we let $\hat \theta(x)$ be the sum on the left of (\ref{eq:primeinvers})  we have, by partial summation,
$$
\sum_{p\le x}\frac1p=\frac1{\log x}\hat \theta(x)+\int_{2}^{x}\frac{\hat \theta(t)}{t(\log t)^2}dt\le
\log\log x +3-\log\log 2,
$$
as we wanted. 

We just have to use Proposition \ref{eq:inverse} in (\ref{error}) to get 
\begin{equation}\label{rd}
\sum_{d\le y}^{\kern14pt\mathfrak b}|R(d)|\le \frac{e^6}{(\log 2)^2}y(\log y)^2\tau(n_s),
\end{equation}
where $n=n_{\mathfrak s}^2n_{\mathfrak b}$ with $n_{\mathfrak b}$ squarefree. 

\

We now treat $S_2$. It is straighforward to see that 
\begin{equation}\label{sru}
|S_2|\le \#\{(k,x,d): k,\text{ squarefree } d>y, \text{ and }n=x^2+kd^2\}\le \frac{6n}{y^2}\tau(n).
\end{equation}
For the second inequality  observe that  we have at most $n/y^2$ such $k$ and, for each one of them, there are at most $\tau(n)$ pairs $(x,d)$ since every solution of $n=x^{2}+d^2k$ corresponds to a factorization of $n$ in ideals in the field 
$\mQ(\sqrt{-k})$. Indeed, let $\mathfrak P$ be a  prime above $p$ and let the factorization of $n$ into ideals be 
$$
n=\prod_{p}p^{e_p}=\prod_{\substack{p|n\\ p\text { ramifie}}}\mathfrak P^{2e_p}\times
\prod_{\substack{p|n\\ p\text { non split}}}p^{e_p}\times\prod_{\substack{p|n\\ p\text { split}}}
\mathfrak P^{e_p}\bar{\mathfrak P}^{e_p},
$$
where $\bar{\mathfrak I}$ denotes the conjugate ideal of $\mathfrak I$, and let $n=x^2+kd^2=(x+\sqrt{-k}d)(x-\sqrt{-k}d)$ be a representation of $n$. Then, the norm $N(x+\sqrt{-k}d)=N(x-\sqrt{-k}d)=n$, so $n$ is representable only if $e_p$ is even for non split primes factors of $n$ and, in this case, the only possible representations come  from different selections of the exponents in the prime ideals of splitting primes, and its conjugates
$$
x+\sqrt{-k}d=\prod_{\substack{p|n\\ p\text { ramifie}}}\mathfrak P^{e_p}\times
\prod_{\substack{p|n\\ p\text { non split}}}p^{\frac{e_p}2}\times\prod_{\substack{p|n\\ p\text { split}}}
\mathfrak P^{\alpha_p}\bar{\mathfrak P}^{e_p-\alpha_p},
$$
for any $0\le \alpha_i\le e_i$,  $i=1,\dots, r$. Hence we have 
$$
\prod_{\substack{p|n\\ p\text { split}}}(1+e_i)\le \tau(n)
$$ 
possible selections of exponents. From here, taking into account the units of the quadratic field, which are at most $6$, we get our bound for $S_2$.

\

Using the trivial inequality  $\tau(n)=O(n^{\delta})<C(\delta) n^{\delta}$ for some constant $C(\delta)$ we get,  plugging the previous estimates in (\ref{rd}) and (\ref{sru}) 
$$
r(n)=n^{1/2}\prod_{p}\left(1-\frac{g(p)}{p^2}\right)+E(n),
$$
where
$$
E(n)<\left(\frac{6n}{y^2}+\frac{e^6}{(\log 2)^2}y(\log y)^2+\frac{3\log y+44}{y}n^{1/2}\right)\tau(n).
$$
Taking $y=\frac{n^{1/3}}{4.54(\log n)^{2/3}}$, and using the bound for the divisor function  we see that
$$
E(n)<160C(\delta)n^{1/3+\delta}(\log n)^{4/3},
$$
 for any $n\ge 8100$.

\section{The divisor function}\label{sec:divisor}
However, in order to prove the theorem, we need an explicit upper bound for the divisor function.  This is the content of the next lemma. 
\begin{prop}\label{prop:tau} For any integer $n$, and any $0<\delta<1$ we have  
$$
\tau(n)< e^{H(\delta)} n^{\delta}.
$$
where $H(\delta)=\delta^2\frac {2^{\frac1\delta}}{(\log 2)^2}+7\delta^3\frac {2^{\frac1\delta}}{(\log 2)^3}$.
\end{prop}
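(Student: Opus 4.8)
The plan is to take logarithms and reduce the multiplicative statement to an additive one over the prime powers dividing $n$. Writing $n=\prod_{p^e\|n}p^e$, we have $\log\tau(n)-\delta\log n=\sum_{p^e\|n}\bigl(\log(e+1)-\delta e\log p\bigr)$, so it suffices to bound this sum by $H(\delta)$. The first observation is that the primes with $p^{\delta}\ge 2$, i.e. $p\ge 2^{1/\delta}$, contribute nothing: since $e+1\le 2^{e}$ for every $e\ge 1$, for such $p$ we get $\log(e+1)-\delta e\log p\le\log(e+1)-e\log 2\le 0$. Hence the whole sum is at most $\sum_{p<2^{1/\delta}}\max_{e\ge 1}\bigl(\log(e+1)-\delta e\log p\bigr)$, and everything reduces to controlling the per-prime maxima for the primes below the threshold $2^{1/\delta}$.

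Next I would carry out the per-prime maximization. Setting $v=\delta\log p$, the function $e\mapsto\log(e+1)-ev$ is concave with unconstrained maximum at $e^{*}=1/v-1$. This splits the range into two regimes. For the boundary primes, those with $v\ge 1/2$ (equivalently $e^{1/(2\delta)}\le p<2^{1/\delta}$), one has $e^{*}\le 1$, so the maximum over $e\ge 1$ is attained at $e=1$ and equals $\log 2-v=\log 2-\delta\log p$. For the remaining small primes $p<e^{1/(2\delta)}$ the interior maximum is admissible and gives the bound $v-1-\log v$. The point worth stressing is that near the threshold the integrality of $e$ matters: the unconstrained value $v-1-\log v$ does not vanish at $v=\log 2$, so using it there would be too wasteful, and the linear bound $\log 2-\delta\log p$ must be used instead.

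I would then sum the two regimes separately, using only the elementary Chebyshev-type estimates already available, namely Lemma \ref{erdos} in the form $\theta(x)<2(x-4)\log 2$ and Proposition \ref{eq:inverse}. The small primes contribute a lower-order amount which, after estimating their number and the size of $v-1-\log v$, is absorbed into the secondary term $7\delta^{3}2^{1/\delta}/(\log 2)^{3}$. The main contribution comes from the boundary sum $\sum_{p<2^{1/\delta}}(\log 2-\delta\log p)=\delta\sum_{p<2^{1/\delta}}\log(2^{1/\delta}/p)$, which by partial summation equals $\delta\int_{2}^{2^{1/\delta}}\pi(t)/t\,dt$; inserting the explicit upper bound for $\pi$ (obtained from $\theta$ via partial summation) and writing $x=2^{1/\delta}$, $\delta=\log 2/\log x$, produces the leading term $\delta^{2}2^{1/\delta}/(\log 2)^{2}$.

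The main obstacle is precisely this boundary sum. Its two natural pieces, $\log 2\cdot\pi(2^{1/\delta})$ and $\delta\,\theta(2^{1/\delta})$, are each of order $\delta\,2^{1/\delta}$ and cancel to leading order, so the entire claimed bound lives in their difference, which is one order of magnitude smaller. Extracting this second-order term with fully explicit constants --- rather than with the prime number theorem, which would give the cleaner constant $1/\log 2$ in place of $1/(\log 2)^{2}$ --- is the delicate part, and it is what forces the factors of $(\log 2)^{-2}$ and $(\log 2)^{-3}$ in $H(\delta)$ and fixes the numerical constant $7$ in the secondary term.
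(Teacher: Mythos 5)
Your plan is correct and reaches the stated bound, but it takes a genuinely different route from the paper. The paper does not redo the per--prime optimization: it starts from Ramanujan's closed form $\tau(n)\le C(\delta)n^{\delta}$ with
$C(\delta)=\prod_{k\ge 1}\bigl(\tfrac{k+1}{k}\bigr)^{\pi((1+1/k)^{1/\delta})}e^{-\delta\theta((1+1/k)^{1/\delta})}$,
rewrites $\log C(\delta)$ as $\delta\sum_{k\le 1/(2^{\delta}-1)}\int_2^{(1+1/k)^{1/\delta}}\pi(t)/t\,dt$, evaluates the sum over $k$ by the generalized partial--summation Lemma \ref{lem:summa} (needed because $\int_2^x\pi(t)/t\,dt$ fails to be differentiable at the primes), and finishes with the Rosser--Schoenfeld bound (\ref{eq:boundpi}). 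Your decomposition keeps exactly the analogue of the $k=1$ term --- the boundary sum $\sum_{p<2^{1/\delta}}(\log 2-\delta\log p)=\delta\int_2^{2^{1/\delta}}\pi(t)/t\,dt$ --- and replaces the entire tail $k\ge 2$ (the primes whose optimal exponent exceeds $1$) by the real--variable maximum $v-1-\log v$; this is lossier there, but those primes lie below $e^{1/(2\delta)}$, and since $\log 2>1/2$ their total contribution is $o(\delta^{k}2^{1/\delta})$ for every fixed $k$, so it is genuinely absorbable. What your route buys is self--containedness (no appeal to the product formula of \cite{r}) and it dispenses with Lemma \ref{lem:summa} altogether; you also correctly isolate the one place where integrality of the exponent is indispensable, namely near $v=\log 2$, where $v-1-\log v\approx 0.06>0$ would otherwise inject an inadmissible term of order $\delta\,2^{1/\delta}$. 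One caveat on constants: if you truly use only Lemma \ref{erdos} rather than (\ref{eq:boundpi}), partial summation gives $\pi(t)/t\lesssim\log 4/\log t$, so your boundary sum comes out near $2\delta^{2}2^{1/\delta}$ rather than $\delta^{2}2^{1/\delta}/\log 2$; this still sits below the claimed $\delta^{2}2^{1/\delta}/(\log 2)^{2}\approx 2.08\,\delta^{2}2^{1/\delta}$, but the margin left to absorb the small--prime excess and the second--order corrections is then only about $0.08\,\delta^{2}2^{1/\delta}$ plus the $7\delta^{3}2^{1/\delta}/(\log 2)^{3}$ term, so the final bookkeeping, while it does close, is tighter than your sketch suggests and needs to be carried out explicitly for all $0<\delta<1$, not just asymptotically as $\delta\to 0$.
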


\begin{proof} The result is a consequence of the upper bound given by  Ramanujan in  \cite[p.113]{r}, $\tau(n)\le C(\delta)n^{\delta}$, where
$$
C(\delta)=\prod_{k\ge 1}\left(\frac{_{k+1}}{^k}\right)^{\pi((1+1/k)^{1/\delta})}
e^{-\delta\theta((1+1/k)^{1/\delta})}.
$$
Now, we see that
\begin{eqnarray*}
\log (C(\delta))&=&\delta\sum_{k\le \frac{1}{2^{\delta}-1}}\sum_{p\le (1+\frac1k)^{1/\delta}}\left(\log\left(1+\frac1k\right)^{1/\delta}-\log p\right)\\
&=&\delta\sum_{k\le \frac{1}{2^{\delta}-1}}\sum_{p\le (1+\frac1k)^{1/\delta}}\int_p^{(1+1/k)^{1/\delta}}\frac{1}{t}dt
=\delta\sum_{k\le \frac{1}{2^{\delta}-1}}\int_2^{(1+1/k)^{1/\delta}}\frac{\pi(t)}{t}dt,
\end{eqnarray*}
where, for the last identity, we use that  the integral between two consecutive primes $\int_{p_n}^{p_{n+1}}\frac{1}{t}dt$ appears exactly $\pi(p_n)$ times in the sum over primes. Now, the trivial bound $\pi(t)\le t$ already gives
\begin{eqnarray*}
\log (C(\delta))&<&\delta\sum_{k\le \frac{1}{2^{\delta}-1}} (1+1/k)^{1/\delta}=\delta2^{1/\delta}+\delta\sum_{2\le k\le \frac{1}{2^{\delta}-1}} (1+1/k)^{1/\delta}
\\
&<&\delta2^{1/\delta}+\delta\left(\frac32\right)^{1/\delta}\frac{1}{2^{\delta}-1}<\delta2^{1/\delta}+\left(\frac32\right)^{1/\delta}\log 2<
2\delta 2^{1/\delta},
\end{eqnarray*}
where the last inequalities are consequence of  the mean value theorem, and the bound $\log 2<\delta\left(\frac43\right)^{1/\delta}$ valid for any $\delta>0$.

\

But this is not the best we can do. To improve it, we will use a slight generalization of the well known partial summation lemma
\begin{lem}\label{lem:summa} 
Let $M,N$ integers, $f(x)$ a continuous function, differentiable at $[M,N]$ except in a set $\frak C$ with finitely many elements, $\{a_n\}_{n\in\mathbb N}\subset\mathbb R$ any sequence and consider for any $k\ge 0$, $S(k)=\sum_{n\le k}a_n$.  Then
$$
\sum_{M< n\le N}f(n)a_n=f(N)S(N)-f(M)S(M)-\int_M^NF(t)S(t)dt,
$$
where $F(t)=f'(t)$ for any $t\notin \frak C$ and $F(t)=0$ for $t\in \frak C$.
\end{lem}
\begin{proof} First note that it is enough to prove it for $M=0$, since the general case comes by substracting the sum up to $M$ from the sum up to $N$.  Now we start, as in the standard summation by parts, by the identity
\begin{eqnarray}\label{eq:summa}
\sum_{n\le N}f(n)a_n&=&\sum_{n\le N}f(n)(S(n)-S(n-1))=\sum_{n\le N}f(n)S(n)-\sum_{n\le N}f(n)S(n-1)\nonumber\\
&=&f(N)S(N)-\sum_{n\le N-1}S(n)(f(n+1)-f(n)).
\end{eqnarray}
Now let 
$$
\{n=x_{1,n}<x_{2,n}<\dots<x_{{l_n,n}}=n+1\}=\frak C\cap[n,n+1]. 
$$
Then
\begin{eqnarray*}
&&\sum_{n\le N-1}S(n)(f(n+1)-f(n))=\sum_{n\le N-1}S(n)\sum_{k=1}^{l_n-1}(f(x_{k+1,n})-f(x_{k,n}))\\
&&=\sum_{n\le N-1}S(n)\sum_{k=1}^{l_n-1}\int_{x_{k,n}}^{x_{k+1,n}}f'(t)dt=\sum_{n\le N-1}S(n)\sum_{k=1}^{l_n-1}\int_{x_{k,n}}^{x_{k+1,n}}F(t)dt\\
&&=\sum_{n\le N-1}S(n)\int_{n}^{n+1}F(t)dt=\sum_{n\le N-1}\int_{n}^{n+1}F(t)S(t)dt=\int_{0}^{N}F(t)S(t)dt
\end{eqnarray*}
and pluging it into (\ref{eq:summa}) finishes the proof of the lemma.
\end{proof}
To prove Proposition \ref{prop:tau} we use  Lemma \ref{lem:summa} with $g(x)=f\circ h(x)$,
$f(x)=\int_2^x\frac{\pi(t)}{t}dt$ and $h(x)=\left(1+\frac1x\right)^{\frac1\delta}$. Observe that $h'(x)=-\frac1\delta\frac{h(x)}{x(x+1)}$, and $f(x)$ is a continuous function, differentiable at any $x\ne p$ a prime number,  with $f'(x)=\frac{\pi(x)}{x}$. Then by the chain rule $g'(x)=-\frac1\delta\frac{h(x)}{x(x+1)}\frac{\pi(h(x))}{h(x)}=-\frac1\delta\frac{\pi(h(x))}{x(x+1)}$ and 
 taking $N=\left[\frac{1}{2^{\delta}-1}\right]$, $M=1$, we get 
$$
\delta\sum_{k\le N}\int_2^{(1+1/k)^{1/\delta}}\frac{\pi(t)}{t}dt=\delta\sum_{k\le N}g(k)=\delta g(N)N+\int_1^{N}\frac{\pi(h(t))}{t(t+1)}[t]dt
$$
To bound the first term, we observe that $g(\frac1{2^\delta-1})=0$, and hence for some number $\xi\in[N,\frac1{2^\delta-1}]$ we have
\begin{eqnarray*}
g(N)&=&\left(N-\frac1{2^\delta-1}\right)g'(\xi)=\frac1\delta\left(\frac1{2^\delta-1}-N\right)\frac{\pi(h(\xi))}{\xi(\xi+1)}\\
&<&\frac1\delta\left(\frac1{2^\delta-1}-N\right)\frac{\pi(h(N))}{N(N+1)}
\end{eqnarray*}
since $h(x)$ is a decreasing function. Now, the trivial bound $\log(1+x)<x$, valid for any $x>0$ gives
$h(N)<e^{\frac{1}{N\delta}}$. Also, 
$$
\frac{1}{N\delta}=\frac{2^\delta-1}{\delta}\left(\frac1{1-(2\delta-1)\{\frac1{2^\delta-1}\}}\right)<\frac{2^\delta-1}{\delta}\left(\frac1{2-2\delta}\right)<\frac{1}{1-\delta}.
$$
For the last inequality we just use the mean value theorem for the function $f(x)=2^x$ twice. In the interval $[0,\delta]$ and also in  $[\delta, 1]$. On the other hand, for any positive $x>1$ we have
$$
\frac{x-[x]}{[x]}=\frac{\{x\}}{x}\left(\frac1{1-\frac{\{x\}}{x}}\right)<\frac1{x-1},
$$
and taking $x=\frac1{2^\delta-1}$,  we get
$$
\frac{\frac1{2^\delta-1}-N}{N}<\frac{2^\delta-1}{2-2^\delta}<\frac{\delta}{1-\delta}.
$$
And then, assuming $\delta<\frac12$, $\pi(h(N)<4$ and we have 
$$
\delta g(N)N<4.
$$
For the second term, we have by positivity
\begin{eqnarray*}
\int_1^{N}\frac{\pi(h(t))}{t(t+1)}[t]dt&<&\int_1^{\frac1{2^\delta-1}}\frac{\pi(h(t))}{(t+1)}dt=\delta\int_2^{2^{\frac{1}{\delta}}}\frac{\pi(u)}{(u^\delta-1)u}du\\
&<&\int_2^{2^{\frac{1}{\delta}}}\left(\frac{1}{(\log u)^2}+\frac32\frac{1}{(\log u)^3}\right)du\\
&=&\left.\frac{u}{(\log u)^2}\right|_2^{2^{\frac1\delta}}+\frac72\int_2^{2^{\frac{1}{\delta}}}\frac{1}{(\log u)^3}du
\end{eqnarray*}
where we have done the change of variables $u=h(t)$, and the mean value theorem for the function $u^x$, together with the bound (see \cite{roscho}) 
\begin{equation}\label{eq:boundpi}
\pi(u)<\frac{u}{\log u}+\frac32\frac{u}{(\log u)^2}
\end{equation} 
To bound the last integral, we again perform integration by parts, getting
\begin{equation}\label{eq:interror}
\int_2^{2^{\frac{1}{\delta}}}\frac{1}{(\log u)^3}du=\left.\frac u{(\log u)^3}\right|_2^{2^\frac1\delta}+3\int_2^{2^{\frac{1}{\delta}}}\frac{1}{(\log u)^4}du.
\end{equation}
Now
$$
3\int_2^{2^{\frac{1}{\delta}}}\frac{1}{(\log u)^4}du=3\int_2^{e^6}\frac{1}{(\log u)^4}du+3\int_{e^6}^{2^{\frac{1}{\delta}}}\frac{1}{(\log u)^4}du<11+\frac12\int_{2}^{2^{\frac{1}{\delta}}}\frac{1}{(\log u)^3}du
$$
where we have use Maple to evaluate the first integral. Now, pluging this into (\ref{eq:interror}) we get
$$
\int_2^{2^{\frac{1}{\delta}}}\frac{1}{(\log u)^3}du<2\delta^3\frac {2^{\frac1\delta}}{(\log 2)^3}-\frac{4}{(\log 2)^3}+22<2\delta^3\frac {2^{\frac1\delta}}{(\log 2)^3},
$$
and collecting all these estimates, we finally obtain
$$
\int_1^{N}\frac{\pi(h(t))}{t(t+1)}[t]dt<\delta^2\frac {2^{\frac1\delta}}{(\log 2)^2}+7\delta^3\frac {2^{\frac1\delta}}{(\log 2)^3}-4,
$$
which gives 
$$
\log(C(\delta))<H(\delta),
$$
as desired.
\end{proof}

{\bf Remark:} A result  weaker than this can be found in \cite{hawri} where, by ementary methods, they get $\tau(n)< e^{\frac{2^{1/\delta}}{\delta}}n^{\delta}$. Also,
it is important to note that the bound in \cite{r} is attained at certain integers, so
the content of Proposition \ref{prop:tau} is also of the right order of magnitude.

\

\begin{proof}[Proof of Theorem \ref{teo:tau}] The bulk of the proof of  Theorem \ref{teo:tau} is an immediate consequence of 
Proposition \ref{prop:tau}. Indeed,  taking  $\delta=\frac{\log 2}{\log\log n}$ in Proposition {\ref{prop:tau}}, gives
\begin{equation}\label{eq:taupsi}
\tau(n)<n^{\left(\frac{\log 2}{\log\log n}+\frac{1}{(\log\log n)^2}+\frac{7}{(\log\log n)^3}\right)}.
\end{equation}
Now, the equation $7x^2+x-\varepsilon$ has one negative root and  one positive root at $x_0=\frac{\sqrt{1+28\varepsilon}-1}{14}$ and so, for any $0\le x<x_0$ the equation is negative and hence taking  $x=\frac1{\log\log n}$ in (\ref{eq:taupsi}) we get for any $n>e^{e^{\frac1{x_0}}}$
$$
\tau(n)<n^{\frac{\log 2+\varepsilon}{\log\log n}}.
$$
Observe that in fact (\ref{eq:taupsi}) is much better  for $n$ large enough than the second inequality in the theorem. Hence, this second result does not pretend to be precise, but rather a simple example  to state and apply in potential applications. Now, let us denote $\mu\approx 3.549$ such that $\mu^2+\mu=(3-\log 2)7$ or, in other words 
\begin{equation}\label{eq:mu}
\frac{1+\mu}{3-\log 2}=\frac7\mu. 
\end{equation}
Now, if  $n\ge 1321$ then $\log\log n\ge\frac{7}{\mu}$, and  we have
$$
\frac7{(\log\log n)^3}<\frac{\mu}{(\log\log n)^2}.
$$
But then
$$
e^{H(\delta)}<n^{\frac{1}{(\log\log n)^2}+\frac{7}{(\log\log n)^3}}<n^{\frac{1+\mu}{(\log\log n)^2}}\le n^{\frac{3-\log 2}{\log\log n}},
$$
by using again $\log\log n\ge\frac{7}{\mu}=\frac{1+\mu}{3-\log 2}$. The result follows in this case by just plugging this into Proposition \ref{prop:tau}. 

\

Finally if $n\le 1320$, the result is trivial. It can be confirmed in an instant with Maple.
\end{proof}

\section{Ending of the proof of Theorems \ref{repsqfree} and \ref{primrepsqfree}.}

To proceed with the proof of Theorem \ref{repsqfree}, we need to choose $\delta$ such that 
$$
E(n)<n^{1/2}\prod_{p}\left(1-\frac{g(p)}{p^2}\right).
$$
for $n$ as small as possible. 

\

Taking $\varepsilon=0.342$ in Theorem \ref{teo:tau}, we see that 
$$
E(n)<160{(\log n)^{\frac43}}n^{\frac13+\frac{\log 2+0.342}{\log\log n}}<4.9\times10^{218}
$$
for $n= 10^{440}$. On the other hand for the same $n$,  using (\ref{eq:gipi}) we get
$$
n^{\frac12}\prod_p\left(1-\frac{g(p)}{p^2}\right)>n^{\frac12}\frac{\prod_p\left(1-\frac{2}{p^2}\right)}{e^\gamma\log\log n+3}>6\times10^{218},
$$
by using $\kappa=\prod_p\left(1-\frac{2}{p^2}\right)>0.3226$, computed with Maple. This not only concludes the proof Theorem \ref{repsqfree}, but  also  provides a proof  that $r(n)>0$ for any $n\ge 10^{440}$, simply noting that 
$$
\frac{n^{\frac12}\prod_p\left(1-\frac{g(p)}{p^2}\right)}{E(n)}> 
\frac{\kappa n^{\frac16-\frac{\log 2+0.342}{\log\log n}}}{(160e^\gamma\log\log n+480)(\log n)^{\frac43}}>1
$$
for  $n\ge10^{440}$, since  we easily see with maple that $
 (160e^\gamma\log\log n+480)(\log n)^{\frac43}<(\log n)^{\frac73}$
 for $n\ge 10^{176}$, while $ \kappa n^{\frac16-\frac{\log 2+0.342}{\log\log n}}>(\log n)^{\frac73}$
 for $n\ge10^{440}$.  \hfil$\hskip160pt\square$


\

{\bf Remark:} As we mentioned, the proof of Theorem \ref{primrepsqfree} is very similar but, in this case,  we  consider $S^*(n)=\{0\le x\le n^{1/2}: (x,n)=1\,,\,n-x^{2} \text{ is squarefree}\}$ and, again, write $r^*(n)=|S^*(n)|$ in terms of the m\"obius function. The factor  $\prod_{p|n}\left(1-\frac1p\right)$ of $r^*(n)$ in Theorem \ref{primrepsqfree} comes
from the extra condition $(x,n)=1$.
\section{Proof of Theorem \ref{sqfree}.}
It might be surprising, after all the computations above, but proving  Theorem \ref{sqfree}  is extremely simple. The idea is that 
to prove the theorem it is enough to prove positivity of the function $r^*(n)$ and, for that, we only need a lower bound. Hence, in order to avoid the divisor function, we restrict ourselves to prime numbers.  In fact the theorem follows from  the trivial inequality
$$
[\sqrt{n}]-r^*(n)<\sum_{\substack{p\le \sqrt{n}\\p\nmid n}}\sum_{\substack{0\le x\le \sqrt{n}\\ p^2|n-x^2}}1=\sum_{\substack{p\le \sqrt 2n^{\frac14}\\p\nmid n}}\sum_{{0\le x\le \sqrt{n}}\atop{p^2|n-x^2}}1+\sum_{\substack{\sqrt 2n^{\frac14}<p\le \sqrt{n}\\p\nmid n}}\sum_{{0\le x\le \sqrt{n}}\atop{p^2|n-x^2}}1=S_1+S_2
$$
To bound $S_1$, we note that  the number of solutions to the congruence $x^2\equiv n\tm {p^2}$ is bounded by $2$ for any $p\nmid n$ and, then
\begin{equation}\label{eq:errorp}
\sum_{\substack{p\le \sqrt{2}n^{\frac14}\\p\nmid n}}\sum_{{0\le x\le \sqrt{n}}\atop{p^2|n-x^2}}1\le 2\sqrt{n}\sum_{\substack{p\le \sqrt{2}n^{\frac14}\\p\nmid n}}\frac1{p^2}+ 2\sum_{\substack{p\le \sqrt{2}n^{\frac14}\\p\nmid n}}1.
\end{equation}
Now a trivial computation with maple gives
$$
\sum_{p\le 10^5}\frac1{p^2}=0.45223
$$
meanwhile for any $n\ge 1$
$$
\sum_{10^5<p\le \sqrt{2}n^{\frac14}}\frac1{p^2}<\int_{10^5}^{\infty}\frac{1}{t^2}dt=10^{-
5}.
$$
This bounds the first term of (\ref{eq:errorp}). For the second term we use (\ref{eq:boundpi}), and the two estimates together give 
\begin{equation}\label{eq:errorp2}
S_1< 0.9045\sqrt n+\frac{12\sqrt2n^{\frac14}}{\log n},
\end{equation}
for $n\ge 21$. 

\

To estimate $S_2$ we notice that, for each prime $p>\sqrt 2 n^{\frac14}$, from the two positive solutions  $x_0,x_1$ of $x^2\equiv n\pmod {p^2}$,  only one, say $x_0$, can verify  $x_0\le \sqrt n$ since $x_1=p^2-x_0>\sqrt n$. Hence, 
$$
S_2\le \sum_{\sqrt 2n^{\frac14}<p\le \sqrt{n}}1<\pi(\sqrt n)<\frac{2\sqrt n}{\log n}+\frac{6\sqrt n}{(\log n)^2}
$$
Adding the two estimations we get
$$
[\sqrt{n}]-r^*(n)<\sqrt n\left( 0.9045+\frac{2}{\log n}+\frac{6}{(\log n)^2}\right)+\frac{12\sqrt2n^{\frac14}}{\log n},
$$
which gives $r(n)>0$ for any $n\ge 179\times 10^{8}$. This number does not seems out of reach of modern computers. However, a simple observation will make much more accesible our final computations. Indded, we note that the problem comes from the very small primes, so we will continue distinguising cases by the primes $2$ and $3$. In particular, if $n\not\equiv 1\pmod 4$, then in $S_1$ we can avoid the prime $2$ since either $n$ is even, and then $2$ is not counted, or $n$ is not a square modulo $4$, and then there are no any $x$ solution to the equation $4|n-x^2$. This gives
$$
[\sqrt{n}]-r^*(n)<\sqrt n\left( 0.4045+\frac{2}{\log n}+\frac{6}{(\log n)^2}\right)+\frac{12\sqrt2n^{\frac14}}{\log n},
$$
and hence $r(n)>0$, for any $n\ge 200$. If $n\not\equiv\square\pmod 9$, then we proceed in a similar way, but removing now the prime $3$ from the sum, and we get
$$
[\sqrt{n}]-r^*(n)<\sqrt n\left( 0.6823+\frac{2}{\log n}+\frac{6}{(\log n)^2}\right)+\frac{12\sqrt2n^{\frac14}}{\log n},
$$
and positivity of $r(n)$ for any $n\ge 74249$. Finally, if $n\equiv \square\pmod {36}$ , then we have removed twice the solutions to $x^2\equiv n\pmod{36}$, and then since there are $4$ solutions, on each interval of lenght $36$, we get 
$$
[\sqrt{n}]-r^*(n)<S_1+S_2-\sum_{\substack{0\le x\le \sqrt{n}\\ 36|n-x^2}}1<\sqrt n\left( 0.7934+\frac{2}{\log n}+\frac{6}{(\log n)^2}\right)+\frac{12\sqrt2n^{\frac14}}{\log n}+4,
$$ 
which again gives $r(n)$  positive for any $n\ge 1375077$. 

\

To check the remaining cases, $n\le 1375077$, we just used the instruction issqrfree of Maple to confirm that $r(n)>0$ for any integer $n\le 1375077$ in less than one hour. It can be done in other ways, not using the implicit definition of maple, in similar amount of time.
 
\

\bibliographystyle{plain}
\bibliography{square-free-vii}

\end{document}